\newtheorem{theorem}{Theorem}[section]
\newtheorem{lemma}[theorem]{Lemma}
\newcommand{\Q}{\mathbb{Q}}
\newcommand{\Z}{\mathbb{Z}}
\newcommand{\R}{\mathbb{R}}
\newcommand{\C}{\mathbb{C}}
\newcommand{\mf}[1]{\mathfrak{#1}}
\newcommand{\mc}[1]{\mathcal{#1}}
\DeclareMathOperator{\SL}{SL}\DeclareMathOperator{\GL}{GL}\DeclareMathOperator{\diag}{diag}
\DeclareMathOperator{\Sp}{Sp}\DeclareMathOperator{\Tr}{Tr}
\DeclareMathOperator{\Sym}{Sym}\DeclareMathOperator{\Mat}{Mat}
\title{Pullbacks of Siegel Eisenstein Series and Weighted Averages of Critical $L$-values}
\date{\today}
\author{Nadine Amersi}
\address{Department of Mathematics\\ University College London\\ London, UK WC1E 6BT}
\email{n.amersi@ucl.ac.uk}
\author{Jeffrey Beyerl}
\address{Department of Mathematical Sciences\\
Clemson University\\ Clemson, SC 29634}
\email{jbeyerl@clemson.edu}
\author{Jim Brown}
\address{Department of Mathematical Sciences\\
Clemson University\\ Clemson, SC 29634}
\email{jimlb@clemson.edu}
\author{Allison Proffer}
\address{Department of Mathematics and Applied Mathematics\\ Virginia Commonwealth University\\ Richmond, Virginia 23284}
\email{profferar@vcu.edu}
\author{Larry Rolen}
\address{Department of Mathematics\\ University of Wisconsin - Madison\\ Madison, WI 53706}
\email{lrolen@wisc.edu}
\subjclass[2000]{Primary 11F67; Secondary 11F46, 11F30}
\keywords{Special values of $L$-functions, pullbacks of Eisenstein series}
\thanks{The first, second, fourth and fifth author were supported by NSF grant DMS-0552799.}
\thanks{The authors would like to thank Paul Garrett for valuable suggestions on an earlier version of this manuscript as well as Neil Calkin and Kevin James for their support during the project.}
\begin{document}
\maketitle

\begin{abstract}
In this paper we obtain a weighted average formula for special values of $L$-functions attached to normalized elliptic modular forms of weight $k$ and full level.  These results are obtained by studying the pullback of a Siegel Eisenstein series and working out an explicit spectral decomposition.
\end{abstract}

\section{Introduction and Statement of Results}

Many of the most influential results and conjectures in number theory can be phrased in terms of $L$-functions, natural generalizations of the Riemann zeta function. For example, they were used by Dirichlet to prove his namesake theorem on primes in arithmetic progressions.  In particular, he studied the non-vanishing properties of certain $L$-functions to deduce his theorem.  Another famous example is the celebrated conjecture of Birch and Swinnerton-Dyer.  This conjecture relates the order of vanishing at $s=1$ of the $L$-function associated to an elliptic curve to the rank of the group of rational points on the elliptic curve.   Given a $L$-function that is ``motivic'', Deligne has conjectured that for certain special values of this $L$-function there should be a normalization of these values that are algebraic.  This conjecture is known in many instances, in particular, it is known for elliptic modular forms where the appropriate periods are known (see \cite{ManinSbornik73} or \cite{ShimuraMathAnn77} for example.) Such special values arise in such deep conjectures as the Bloch-Beilinson conjecture and the Bloch-Kato conjecture.

In this paper we establish weighted averages for certain special values of $L$-functions attached to elliptic modular forms.  This is accomplished by considering pullbacks of Eisenstein series to embedded copies of lower-dimensional Siegel upper-half spaces. In particular, for a partition $n=n_1+n_2+\ldots +n_{m}$ of $n$ we have an embedding $\mathfrak{h}^{n_1}\times\mathfrak{h}^{n_2} \times \cdots \times \mathfrak{h}^{n_m}\hookrightarrow \mathfrak{h}^{n}$ given by $(z_1, \dots, z_{m}) \mapsto \diag(z_1, \dots, z_{m})$.   Thus, given an Eisenstein series on $\mf{h}^{n}$ we can restrict it to the image of $\mathfrak{h}^{n_1}\times\mathfrak{h}^{n_2} \times \cdots \times \mathfrak{h}^{n_m}$ in $\mf{h}^{n}$.  We call this a pullback of the Eisenstein series.  Note such a pullback is a Siegel modular form in $n_i$ dimensions when all other variables are held fixed. The cases $\mathfrak{h}^1\times\mathfrak{h}^1\hookrightarrow \mathfrak{h}^{2}$ and $\mathfrak{h}^1\times\mathfrak{h}^1\times \mathfrak{h}^1 \hookrightarrow \mathfrak{h}^{3}$ were used by Lanphier in \cite{LanphierMathComp10} to calculate averages of special values of symmetric square and symmetric cube $L$-functions.  The method used by Lanphier is closely followed in this paper.

Heim used pullback formulas to discover new congruences for the Ramanujan $\tau$ function related to the irregular primes 131 and 593, see \cite{HeimMathComp09}. In this paper he also gives a new proof of the famous ``691 congruence",
\[
\tau(n)\equiv \sigma_{11}(n) \text{ (mod 691)}
\]
for all $n \geq 1$ where $\tau$ is the Ramanujan tau function and $\sigma_{k}(n):=\displaystyle \sum_{d\vert n}d^k$.

In this paper we study the embedding $\mathfrak{h}^1\times\mathfrak{h}^2\hookrightarrow \mathfrak{h}^{3}$.  Let $H(a,b)$ denote the generalized class numbers of Cohen (\cite{cohen75}).  We define theta functions
\begin{align*}
\vartheta_1(z) &= \sum_{(m,n)\in \mathbb{Z}^2} q^{m^2+n^2},\\
\vartheta_2(z) &= \sum_{(m,n)\in \mathbb{Z}^2}q^{m^2+mn+n^2}.
\end{align*}
Given two modular forms $f$ and $g$, we write $L(s,f,g)$ to denote the Rankin convolution $L$-function of $f$ and $g$. Finally, set $L(s,\Sym^2 f)$ to be the symmetric square $L$-function of $f$.  Our main result is the following theorem.

\begin{theorem}\label{thm:1.1}
Let $\mathcal{B}_k$ be an orthogonal basis of $S_{k}(\SL_2(\Z))$ consisting of normalized cuspidal eigenforms. Then we have the following:
\begin{align*}
\sum_{f \in \mc{B}_{k}} \frac{L(k-1,f)}{\langle f, f \rangle} \mc{A}_{k}(f) & = \left(\frac{2^{4k-4} \pi^{3k-3}}{(k-1)! (2k-3)!}\right)\left(\frac{k}{2B_{k}} \sum_{b=-2}^{2} H(k-1, 4-b^2) + B_{2k-2} \right.\\
        &\left.+(k-1)(2^{2k-4} + 2\cdot 3^{k-1} + 2^{k+2} - 23 - 2^3 H(k-1,3) - 3 H(k-1,4)) \right)
\end{align*}
where
\begin{align*}
\mc{A}_{k}(f) &= \zeta(k-1)\left(3+\frac{(-1)^{k/2}(k-1)!(2\pi)^{k-1}}{(2k-2)!L(2k-2,\Sym^2 f)}\left[2^{2k-3}L(k-1,\chi_{-4})L(k-1,f,\vartheta_1) \right. \right.\\ & \left. \left. + 2 \cdot 3^{k-3/2}L(k-1,\chi_{-3})L(k-1,f,\vartheta_2)\right]\right).
\end{align*}
\end{theorem}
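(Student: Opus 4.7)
The plan is to adapt Lanphier's method (\cite{LanphierMathComp10}) to the embedding $\mathfrak{h}^1 \times \mathfrak{h}^2 \hookrightarrow \mathfrak{h}^3$. First I would introduce the weight-$k$ Siegel Eisenstein series $E_k^{(3)}$ on $\Sp_6(\Z)$ and form its pullback $F(z_1, Z_2) := E_k^{(3)}(\diag(z_1, Z_2))$ along the diagonal embedding. The identity of Theorem~\ref{thm:1.1} will be obtained by computing $F$ in two independent ways and equating the results: once spectrally, in the $z_1$ variable, and once geometrically via the Fourier expansion inherited from $E_k^{(3)}$.

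For the spectral side, I would regard $F(\cdot, Z_2)$ as a family of weight-$k$ elliptic modular forms in $z_1$, parametrized by $Z_2 \in \mathfrak{h}^2$. The cuspidal part of this family, under Garrett's pullback formula for $\Sp_2 \times \Sp_4 \hookrightarrow \Sp_6$, takes the shape $\sum_{f \in \mc{B}_k} \frac{L(k-1, f)}{\langle f, f \rangle} f(z_1) \cdot \mc{G}_f(Z_2)$, where $\mc{G}_f$ is a weight-$k$ Siegel modular form of genus $2$ obtained from a Klingen/Saito--Kurokawa-type lift of $f$; the denominator $L(2k-2, \Sym^2 f)$ that appears in $\mc{A}_k(f)$ arises from the normalization of this lift. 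To isolate $\mc{A}_k(f)$, I would further restrict $\mc{G}_f$ to the sub-embedding $\mathfrak{h}^1 \times \mathfrak{h}^1 \hookrightarrow \mathfrak{h}^2$; along this locus the binary quadratic forms of discriminant $-4$ and $-3$ produce the theta series $\vartheta_1$ and $\vartheta_2$, and Rankin--Selberg unfolding of the resulting integral yields the convolutions $L(k-1, f, \vartheta_j)$ together with the Dirichlet $L$-values $L(k-1, \chi_{-4})$ and $L(k-1, \chi_{-3})$ that appear in $\mc{A}_k(f)$.

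For the geometric side, I would use a Bruhat/coset decomposition of $\Sp_6(\Z)$ adapted to the Siegel parabolic and reorganize the resulting sum according to the rank and discriminant of the indexing $3 \times 3$ half-integral symmetric matrices evaluated at $\diag(z_1, Z_2)$. The Fourier coefficients collapse to classical arithmetic data: Bernoulli-number terms $B_k$ and $B_{2k-2}$ arise from the rank-zero and rank-one strata (the constant and lower-genus Eisenstein terms), while rank-two contributions in the $\mathfrak{h}^2$ slot produce sums of generalized divisor functions that specialize to Cohen's class numbers $H(k-1, 4-b^2)$ for $|b| \le 2$ as well as the isolated values $H(k-1, 3)$ and $H(k-1, 4)$. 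The small integer constants ($2^{2k-4}$, $2 \cdot 3^{k-1}$, $2^{k+2}$, and $23$) emerge from enumerating the finitely many small-discriminant matrix reductions encountered in this bookkeeping. The principal obstacle is the precise matching of normalizations: tracking the powers of $\pi$, factorials, and Bernoulli numbers so that Garrett's spectral identity and the direct coset expansion agree exactly, as encoded in the leading factor $\frac{2^{4k-4} \pi^{3k-3}}{(k-1)! (2k-3)!}$.
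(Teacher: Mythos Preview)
Your overall strategy matches the paper's: pull back $E_{3,k}$ along $\mathfrak{h}^1\times\mathfrak{h}^2\hookrightarrow\mathfrak{h}^3$, use Garrett's decomposition to get a spectral expansion in the $z_1$ variable, further restrict the $\mathfrak{h}^2$ variable to $\mathfrak{h}^1\times\mathfrak{h}^1$, and then compare the $q_1q_2q_3$ Fourier coefficients of both sides. Two points where your description diverges from what actually happens are worth correcting.

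First, the genus-$2$ form $\mathcal{G}_f$ is precisely the Klingen Eisenstein series $E_{1,k}^2(w;f)$, not a Saito--Kurokawa lift, and the symmetric-square value $L(2k-2,\Sym^2 f)$ does \emph{not} enter through the spectral normalization. B\"ocherer's inner-product formula gives the coefficient in the spectral expansion as (up to zeta factors) $L(k-1,f)/\langle f,f\rangle$; the $\Sym^2$ $L$-value appears only later, via Mizumoto's explicit formula for the Fourier coefficients $A_{1,k}^2(T;f)$ of the Klingen Eisenstein series when $-\det(2T)$ is a fundamental discriminant. It is this formula, applied to $T=\left(\begin{smallmatrix}1&0\\0&1\end{smallmatrix}\right)$ and $T=\left(\begin{smallmatrix}1&\pm 1/2\\\pm 1/2&1\end{smallmatrix}\right)$, that produces the Rankin convolutions $L(k-1,f,\vartheta_j)$ and the Dirichlet values $L(k-1,\chi_{-3})$, $L(k-1,\chi_{-4})$; no separate Rankin--Selberg unfolding is needed.

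Second, your bookkeeping for the ``geometric side'' conflates two distinct contributions. The sum $\sum_{b=-2}^{2}H(k-1,4-b^2)$ does not come from a Bruhat analysis of $E_{3,k}$; it arises from the $q_2q_3$ coefficient of $E_{2,k}(z_2,z_3)$ in the non-cuspidal term $E_{1,k}(z_1)E_{2,k}(z_2,z_3)$ of the spectral decomposition, via the Eichler--Zagier formula. The isolated values $H(k-1,3)$, $H(k-1,4)$ and the constants $2^{2k-4}$, $2\cdot 3^{k-1}$, $2^{k+2}$, $23$ come from the $q_1q_2q_3$ coefficient of $E_{3,k}$ itself, which the paper simply quotes from Lanphier rather than recomputing.
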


Note that if $k\in\{12,16,18,20,22\}$ then $\dim(S_k(\SL_2(\Z)))=1$ and hence the weighted average becomes an exact value involving $L$-functions associated to the unique normalized cuspidal eigenform in this dimension.

As $k$ is even,  $(-1)^{k-1}\cdot3$ and $(-1)^{k-1}\cdot4$ are fundamental  discriminants. In this situation, the class numbers and the special values of the $L$-functions attached to quadratic field extensions in Theorem \ref{thm:1.1} are given by the following result.

\begin{lemma}(\cite{cohen75})
If $D:=(-1)^r\cdot n$ is a fundamental discriminant, then $H(r,n)=L(1-r,\chi_D)=-\frac{B_{r,\chi_D}}{r},$ where $B_{r,\chi_D}:=\vert D\vert^{r-1}\displaystyle \sum_{j=0}^{\vert D\vert}\chi_D(j)B_r\left(\frac{j}{\vert D\vert}\right)$ is a generalized Bernoulli number and $B_r(x):=\displaystyle\sum_{k=0}^r \binom{r}{k}B_kx^{r-k}$ is a Bernoulli polynomial.
\end{lemma}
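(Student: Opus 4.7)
The statement splits naturally into two independent assertions: the identification $H(r,n)=L(1-r,\chi_D)$ of Cohen's generalized class number with a special value of a Dirichlet $L$-function, and the classical formula $L(1-r,\chi_D)=-B_{r,\chi_D}/r$ relating this special value to a generalized Bernoulli number. The plan is to treat these in turn, and both rest on results already present in the literature, so the proof will largely amount to assembling known facts in the correct form.

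For the first identification, I would start from Cohen's own definition of $H(r,N)$, where Cohen proves that for an arbitrary positive integer $N$ of the form $N=|D|f^2$ with $D=(-1)^r\cdot m$ a fundamental discriminant (or zero), one has the explicit evaluation
\[
H(r,N)=L(1-r,\chi_D)\sum_{d\mid f}\mu(d)\chi_D(d)d^{r-1}\sigma_{2r-1}(f/d).
\]
Under the hypothesis of the lemma, $n=|D|$ is itself the absolute value of a fundamental discriminant, so the conductor $f$ equals $1$, the divisor sum collapses to the single term $d=1$ giving the value $1$, and we are left with $H(r,n)=L(1-r,\chi_D)$ directly.

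For the second identification, I would invoke the classical Hurwitz zeta function argument. Writing
\[
L(s,\chi_D)=|D|^{-s}\sum_{j=1}^{|D|}\chi_D(j)\,\zeta\!\left(s,\frac{j}{|D|}\right),
\]
where $\zeta(s,a)=\sum_{m\geq 0}(m+a)^{-s}$ is the Hurwitz zeta function, and using the standard evaluation $\zeta(1-r,a)=-B_r(a)/r$ at negative integers, I would substitute $s=1-r$ and recognize the resulting combination $|D|^{r-1}\sum_{j=1}^{|D|}\chi_D(j)B_r(j/|D|)$ as precisely $B_{r,\chi_D}$, yielding $L(1-r,\chi_D)=-B_{r,\chi_D}/r$.

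Neither step presents a serious obstacle, since both are well documented: the first is essentially the main theorem of Cohen's 1975 paper, and the second is a textbook computation. The only subtlety worth being careful about is the sign convention for $D=(-1)^r\cdot n$ and the check that $\chi_D$ is the correct Kronecker symbol so that the two formulas agree; a direct comparison of normalizations between Cohen's paper and the standard conventions for generalized Bernoulli numbers finishes the argument.
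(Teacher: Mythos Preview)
Your proposal is correct. The paper itself does not supply a proof of this lemma at all; it is stated with a bare citation to Cohen's 1975 paper and used as a black box to compute the class-number entries in the tables. Your two-step outline---specializing Cohen's general formula for $H(r,N)$ to the case $f=1$, and then invoking the Hurwitz zeta evaluation $\zeta(1-r,a)=-B_r(a)/r$ to obtain $L(1-r,\chi_D)=-B_{r,\chi_D}/r$---is exactly the standard argument and is what one finds in Cohen's paper together with any textbook treatment of generalized Bernoulli numbers.
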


The values of $L(k-1,f,\vartheta_i)$ can be computed following the method of \cite{cohen75} and \cite{ZagierSpringer77} along with the trace map as in \cite{bocherer81}. The following tables give the relevant class number values for the first $6$ values of $k$, as well as the value $\alpha_k$ of the weighted average given in Theorem \ref{thm:1.1}.  Note that these values are calculated by evaluating the right hand side of the equation in Theorem \ref{thm:1.1}. Furthermore, observe that the value is zero for $k=14$ as it should be given that there are no nonzero cusp forms of weight $14$.

\begin{table}[ht]
\caption{Generalized Class numbers $H(k-1,n)$} 
\scriptsize
\begin{tabular}{c c c }  
\hline\hline                        
 &$n=3$&$n=4$\\ [0.5ex] 
$k=12$&-3694/3&-50521/2\\
$k=14$&111202/3&2702765/2\\
$k=16$&-13842922/9&-199360981/2\\
$k=18$&252470402/3&19391512145/2\\
$k=20$&-17612343854/3&-2404879675441/2\\
$k=22$&4577258092006/9&370371188237525/2\\

\hline                    

\hline     
\end{tabular}
\label{table:nonlin}  
\end{table}

\begin{table}[ht]
\caption{Values of $\alpha_k$} 
\scriptsize
\begin{tabular}{c c }  
\hline\hline                        
$k$&$\alpha_k$\\ [0.5ex] 
$k=12$&$(2^{31}\cdot \pi^{33})/(3^6\cdot 5^3\cdot 7^3\cdot 11^2\cdot 13\cdot 17\cdot 19\cdot 23\cdot 691)$\\
$k=14$&$0$\\
$k=16$&$(2^{40}\cdot \pi^{45})/(3^{13}\cdot 5^6\cdot 7^3\cdot 11^2\cdot 13^2\cdot 17\cdot 19\cdot 23\cdot 29\cdot 31\cdot 3617)$\\
$k=18$&$(2^{37}\cdot \pi^{51})/(3^{12}\cdot 5^5\cdot 7^5\cdot 11^3\cdot 13^2\cdot 17^2\cdot 19\cdot 23\cdot 29\cdot 31\cdot 43687)$\\
$k=20$&$(2^{39}\cdot \pi^{57})/(3^{17}\cdot 5^7\cdot 7^3\cdot 11^2\cdot 13^2\cdot 17^2\cdot 19^2\cdot 29\cdot 31\cdot 37\cdot 283\cdot 617)$\\
$k=22$&$(2^{42}\cdot 4409\cdot \pi^{63})/(3^{21}\cdot 5^8\cdot 7^5\cdot 11^3\cdot 13^2\cdot 17^2\cdot 19^2\cdot 23\cdot 29\cdot 31\cdot 37\cdot 41\cdot 131\cdot 593)$\\

\hline                    

\hline     
\end{tabular}
\label{table:nonlin}  
\end{table}

\section{Preliminaries and definitions}

We begin by recalling that the Siegel upper-half space is given by
\begin{equation*}
\mathfrak{h}^n =\{ Z \in \Mat_{n}(\C)| \,\, ^tZ=Z, Z=X+iY,Y>0 \},
\end{equation*}
where $^tM$ denotes the transpose of $M$ and $M>0$ signifies that $M$ is positive-definite. Note that for $n=1$ this reduces to the usual complex upper-half space.

Let $J_n:=\begin{pmatrix} 0_n&-1_n\\1_n&0_n\end{pmatrix}$. The symplectic group $\Sp_{2n}$ is defined by
\begin{equation*}
\Sp_{2n} =\{g\in \GL_{2n} | \,\, ^tg J_n g=J_n\}.
\end{equation*}
The group $\Sp_{2n}(\R)$ acts on $\mathfrak{h}^{n}$ via
\begin{equation*}
\begin{pmatrix}
A&B\\
C&D
\end{pmatrix}
\cdot Z = (AZ+B)(CZ+D)^{-1}.
\end{equation*}
Set $\Gamma_n =\Sp_{2n}(\mathbb{Z})$.

Let $P_{n,r}$ be the parabolic subgroup of $\Sp_{2n}$ given by
\begin{equation*}
P_{2n,r}=\left\{ \begin{pmatrix}A & B \\ 0_{(n-r, n+r)} &D \end{pmatrix}\in \Sp_{2n} \right\}.
\end{equation*}
Note that in the case that $r=0$ we obtain the Siegel parabolic subgroup
\begin{equation*}
P_{2n,0}=\left\{\begin{pmatrix} A &B \\ 0_{n} & D \end{pmatrix} \in \Sp_{2n}\right\}.
\end{equation*}

The Klingen-Eisenstein series of degree $n$ attached to a cusp form $f \in S_{k}(\Gamma_{r})$ (with $k> n + r+1$) is defined by
\begin{equation*}
E_{r,k}^n(z;f)=\sum_{\gamma \in P_{2n,r}(\mathbb{Q})\backslash \Sp_{2n}(\mathbb{Q})}f((\gamma z )^{\ast})j_{n}(\gamma,z)^{-k}
\end{equation*}
where $Z^{\ast}$ denotes the upper left $r \times r$ block of $Z$ and for $\gamma= \begin{pmatrix} A &B \\ C & D \end{pmatrix}$ we set $j_{n}(\gamma,Z)=\det(CZ+D)$.  The case that $r=0$ reduces to the case of the classical Siegel Eisenstein series of weight $k$ and level $\Gamma_{n}$, namely,
\begin{equation*}
E_{0,k}^{n}(z; 1) = \sum_{\gamma \in P_{2n,0}(\mathbb{Q}) \backslash \Sp_{2n}(\mathbb{Q})}
j_{n}(\gamma,z)^{-k}.
\end{equation*}
As this Eisenstein series will arise often, we set $E_{n,k}(z) = E_{0,k}^{n}(z;1)$ to ease the notation.

We will also make use of the Siegel operator.  The Siegel operator $\Phi$ is a linear operator sending modular forms of weight $k$ and level $\Gamma_{n}$ to modular forms of weight $k$ and level $\Gamma_{n-1}$.  It is defined by setting
\begin{equation*}
\Phi f(Z) = \lim_{t\rightarrow\infty} f\begin{pmatrix}Z&0\\0&it\end{pmatrix}
\end{equation*}
for $f\in M_k(\Gamma_n)$ and $Z\in \mathfrak{h}^{n-1}$.  The basic properties of the $\Phi$ operator can be found in \cite{Klingen}.  We summarize some of the relevant ones here.
\begin{enumerate}
\item The kernel of $\Phi$ is precisely the cusp forms:  $\ker(\Phi) = S_{k}(\Gamma_{n})$,
\item \label{rmk:2} The Siegel operator takes Klingen Eisenstein series to Klingen Eisenstein series, namely,
\begin{equation*}
\Phi E_{r,k}^{n}(\ast; f) = \left\{ \begin{array}{ll} E_{r,k}^{n-1}(\ast;f) & \text{for $r<n$}; \\ 0 & \text{for $r =n$.} \end{array}\right.
\end{equation*}
\item The Fourier coefficients of $\Phi f$ are easily determined from the expansion of $f$ as follows. If $f(z)=\sum_{T\geq 0,T\in \Lambda_n}a(T)e^{2\pi i \Tr(TZ)}$, then
\begin{equation*}
\Phi f(z') =\sum_{T'\geq 0,T'\in \Lambda_{n-1}}a\begin{pmatrix}T'&0\\0&0\end{pmatrix}e^{2\pi i \Tr(T'z')} \text{ for } z'\in \mathfrak{h}^{n-1}.
\end{equation*}
\end{enumerate}

We have the following decomposition of the space of Siegel modular forms:
\begin{equation*}
M_k(\Gamma_n)=\mathcal{E}_k(\Gamma_n)\bigoplus S_k(\Gamma_n).
\end{equation*}
Here $\mathcal{E}_k(\Gamma_n)$ denotes the space of Klingen-Eisenstein series of weight $k$ and degree $n$. For more on the general definitions and classical theory of Siegel modular forms one can see \cite{GeerSpringer08} or \cite{KohnenPOSTECH07}.

\section{An Explicit Spectral Decomposition}

We now consider the embedding $\iota:\mathfrak{h}^1\times \mathfrak{h}^2\hookrightarrow \mathfrak{h}^3$.
The corresponding group embedding $\iota:\Sp_2 \times \Sp_4 \hookrightarrow \Sp_6$ is given by
\begin{equation*}
\iota \left( \begin{pmatrix} a_1&b_1\\c_1&d_1\end{pmatrix},\begin{pmatrix} a_2&b_2\\c_2&d_2\end{pmatrix} \right)=\begin{pmatrix} a_1&0&b_1&0\\0&a_2&0&b_2\\c_1&0&d_1&0\\0&c_2&0&d_2\end{pmatrix}.
\end{equation*}
Note that we abuse notation and use $\iota$ to denote each of the embeddings.
These two embeddings are compatible in that for all $\gamma=(\gamma_1,\gamma_2)\in \Sp_2(\R) \times \Sp_4(\R)$ and
$Z=(z,w)\in \mathfrak{h}^1\times \mathfrak{h}^2$ we have $\iota(g(Z))=\iota(g)(\iota(Z)).$
In addition, for $Z= (z,w) \in \mf{h}^1 \times \mf{h}^2$ and $\gamma = (\gamma_1, \gamma_2) \in \Sp_2(\Q) \times \Sp_4(\Q)$ we have that
\begin{equation}\label{eqn:jsplit}
j_{3}(\iota(\gamma), \iota(Z)) = j_{1}(\gamma_1, z) j_{2}(\gamma_2, w).
\end{equation}
We write $E_{3,k}(z,w)$ to denote the pullback $E_{3,k}(\iota(z,w))$.  We now study this Eisenstein series.

Our first step is to give a spectral decomposition for the Eisenstein series as given by Garrett in \cite{GarrettProgMath84}.  We begin by recalling the following double coset decomposition.

\begin{lemma}(\cite{GarrettMathAnn87}, \cite{LanphierMathComp10})
The double coset space
$P_{6,0}(\mathbb{Q})\backslash \Sp_6(\mathbb{Q}) / \iota(\Sp_2(\mathbb{Q}),\Sp_4(\mathbb{Q}))$ has two irredundant representatives given by $1_6$ and $\xi: =\begin{pmatrix} 1_3 & 0_3 \\ \varsigma & 1_3\end{pmatrix}$ where $\varsigma = \begin{pmatrix} 0 & 0 & 1 \\ 0 & 0 & 0 \\ 1& 0 & 0 \end{pmatrix}.$  The isotropy subgroups are given by $\iota (P_{2,0}(\mathbb{Q}) \times P_{4,0}(\mathbb{Q}))$ and
\begin{equation*}
H_1(\mathbb{Q})= \left. \left\{ \iota \left( \begin{pmatrix} \alpha & \beta \\ \gamma & \delta \end{pmatrix}^{\natural},\begin{pmatrix} a&x&y&y'\\0&\alpha&y'&\beta\\0&0&a^{-1}&0\\0&\gamma &x'&\delta \end{pmatrix} \right) \right\vert \begin{pmatrix} \alpha&\beta \\ \gamma & \delta \end{pmatrix}\in \SL_2(\mathbb{Q}),a\in \mathbb{Q}^{\times},\ x,x',y,y'\in \mathbb{Q} \right\}
\end{equation*}
where $g^{\natural} = w_2gw_2$ for $w_2 = \begin{pmatrix}0&1\\1&0\end{pmatrix}.$
\end{lemma}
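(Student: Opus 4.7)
The plan is to interpret the double cosets as orbits on the Lagrangian Grassmannian. The Siegel parabolic $P_{6,0}(\mathbb{Q})$ is the stabilizer of the reference Lagrangian $L_0 = \langle e_1, e_2, e_3 \rangle$ inside the six-dimensional symplectic space $W = \mathbb{Q}^6$ equipped with the form $J_3$, so $P_{6,0}(\mathbb{Q}) \backslash \Sp_6(\mathbb{Q})$ is in bijection with the Lagrangian Grassmannian of $W$, and the double cosets in question correspond bijectively to the $\iota(\Sp_2(\mathbb{Q}) \times \Sp_4(\mathbb{Q}))$-orbits on that Grassmannian. The embedding $\iota$ induces an orthogonal decomposition $W = V_1 \perp V_2$ with $V_1$ two-dimensional and $V_2$ four-dimensional, and $\iota(\Sp_2 \times \Sp_4)$ acts as $\Sp(V_1) \times \Sp(V_2)$ preserving the splitting.

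For a Lagrangian $L \subset W$, I would introduce the pair of invariants $a := \dim(L \cap V_1)$ and $b := \dim(L \cap V_2)$, which are constant on orbits. Since $L \cap V_i$ is isotropic in $V_i$, we have $a \leq 1$ and $b \leq 2$, and since the projection $L \to V_1$ has kernel $L \cap V_2$, the bound $3 - b \leq \dim V_1 = 2$ forces $b \geq 1$. I would then rule out the cases $(a, b) = (0, 2)$ and $(1, 1)$ using the isotropy of $L$: for $(0, 2)$, any $v = v_1 + v_2 \in L$ not in $L \cap V_2$ pairs trivially with $L \cap V_2$, forcing $v_2 \in (L \cap V_2)^{\perp} = L \cap V_2$ and hence $v_1 \in L \cap V_1 = 0$, a contradiction; a parallel argument, using that the isotropic line $L \cap V_1$ is its own perpendicular in $V_1$, rules out $(1, 1)$. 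This leaves only $(a, b) \in \{(1, 2), (0, 1)\}$.

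Witt's theorem then shows each invariant value corresponds to a single orbit. When $(a, b) = (1, 2)$, the dimensions force $L = (L \cap V_1) \oplus (L \cap V_2)$, and transitivity of $\Sp(V_1)$ on isotropic lines in $V_1$ together with transitivity of $\Sp(V_2)$ on Lagrangians in $V_2$ gives a single orbit. When $(a, b) = (0, 1)$, one parameterizes $L$ by the nonzero isotropic vector $w$ spanning $L \cap V_2$ together with a hyperbolic pair in $w^{\perp}/\langle w \rangle$ lifting the splitting to $V_1$, and invokes Witt's extension theorem to conclude. To identify the two listed representatives, one checks that $L_0$ has $(a,b) = (1,2)$, corresponding to $1_6$, and that $\xi \cdot L_0 = \langle e_1 + f_3,\, e_2,\, e_3 + f_1 \rangle$ has $(a,b) = (0,1)$, corresponding to $\xi$.

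Finally, the isotropy subgroup of $P_{6,0}(\mathbb{Q}) \cdot g$ under right multiplication by $\iota(\Sp_2(\mathbb{Q}) \times \Sp_4(\mathbb{Q}))$ is the intersection $\iota(\Sp_2(\mathbb{Q}) \times \Sp_4(\mathbb{Q})) \cap g^{-1} P_{6,0}(\mathbb{Q}) g$. For $g = 1_6$, the condition that the lower-left $3 \times 3$ block of $\iota(\gamma_1, \gamma_2)$ vanish is equivalent to $\gamma_1 \in P_{2,0}(\mathbb{Q})$ and $\gamma_2 \in P_{4,0}(\mathbb{Q})$, yielding $\iota(P_{2,0}(\mathbb{Q}) \times P_{4,0}(\mathbb{Q}))$. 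For $g = \xi$, I would compute $\xi \iota(\gamma_1, \gamma_2) \xi^{-1}$ explicitly and impose that its lower-left $3 \times 3$ block vanish; this produces a system of linear relations among the entries of $\gamma_1$ and $\gamma_2$ which, combined with the symplectic conditions defining $\Sp_2$ and $\Sp_4$, can be rearranged into the displayed parameterization of $H_1(\mathbb{Q})$ once one recognizes that the $\varsigma$-block swaps the first and third coordinates and therefore produces exactly the $\natural$-twist on the $\Sp_2$-factor. This final matrix calculation is the main technical obstacle: the bookkeeping of which entries of $\gamma_2$ become tied to which entries of $\gamma_1$ is intricate, and one must verify that the $\Sp_4$ relations are compatible with the resulting family so as to confirm the explicit form given for $H_1(\mathbb{Q})$.
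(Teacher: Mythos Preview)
The paper does not supply its own proof of this lemma: it is quoted with attribution to Garrett and Lanphier, and the authors simply use the statement. Your approach via the identification of $P_{6,0}(\mathbb{Q})\backslash \Sp_6(\mathbb{Q})$ with the Lagrangian Grassmannian, the invariants $(a,b)=(\dim(L\cap V_1),\dim(L\cap V_2))$, and Witt's theorem is precisely Garrett's method from the cited reference, so you are not deviating from the intended route but rather reconstructing it.

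The argument you give is correct. The elimination of $(a,b)=(0,2)$ and $(1,1)$ is sound: in each case the perpendicularity forces $L=(L\cap V_1)\oplus(L\cap V_2)$, which has the wrong dimension unless $a+b=3$. Your identification of $\xi\cdot L_0=\langle e_1+f_3,\,e_2,\,e_3+f_1\rangle$ and the check that it has invariants $(0,1)$ are accurate given the block structure of $\iota$. For the isotropy at $\xi$ you correctly describe the computation but stop short of carrying it out; since $\varsigma$ swaps the first and third basis vectors, conjugation by $\xi$ indeed intertwines the $\Sp_2$-entries with the outer corners of the $\Sp_4$-block, and the $\natural$-twist by $w_2$ is exactly the bookkeeping device that records this swap. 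If you want the argument to be self-contained you should write out the $6\times 6$ product and read off the vanishing conditions on the lower-left block, but there is no conceptual obstacle beyond the linear algebra you anticipate.
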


This lemma gives the splitting $P_{6,0}( \mathbb{Q}) \backslash \Sp_6(\mathbb{Q})=\iota (P_{2,0}(\mathbb{Q}),P_{4,0}(\mathbb{Q}))\sqcup H_1(\mathbb{Q})$. Thus, our pullback decomposes as

\begin{align}\label{decomp}
E_{3,k}(z,w)= &\sum_{\gamma\in (P_{2,0}(\mathbb{Q}) \times P_{4,0}(\mathbb{Q})) \backslash ( \Sp_2(\mathbb{Q})\times \Sp_4(\mathbb{Q}))} j_{3}(\gamma, \iota(z,w))^{-k}\\
     &+ \sum_{\gamma \in H_1(\mathbb{Q})\backslash (\Sp_2(\mathbb{Q})\times \Sp_4(\mathbb{Q}))} j_{3}(\xi \gamma,  \iota(z,w))^{-k}.\notag
\end{align}
With regards to the first term, note that  $(P_{2,0}(\mathbb{Q}) \times P_{4,0} (\mathbb{Q}))\backslash (\Sp_2(\mathbb{Q})\times \Sp_4(\mathbb{Q}))\cong (P_{2,0}(\mathbb{Q})\backslash \Sp_2(\mathbb{Q}))\times (P_{4,0}(\mathbb{Q}) \backslash \Sp_4(\mathbb{Q}))$. This combined with equation (\ref{eqn:jsplit}) gives that
\begin{equation*}
\sum_{\gamma\in (P_{2,0}(\mathbb{Q}) \times P_{4,0}(\mathbb{Q})) \backslash (\Sp_2(\mathbb{Q})\times \Sp_4(\mathbb{Q}))} j_{3}(\gamma \iota (z,w))^{-k} =E_{1,k}(z)E_{2,k}(w).
\end{equation*}

The second term is the sum on the ``big cell".  In \cite{brownpullback}, the adelic version of this term is studied in $\S$ 4.  The Eisenstein series in \cite{pullbacks} is assumed to have non-trivial character.  However, this assumption is only used to reduce the Eisenstein series to the sum over the big cell.  Since here we are only looking at the sum over the big cell, the arguments given in $\S$ 4 of \cite{pullbacks} carry-over verbatim to show that
\begin{equation*}
\sum_{\gamma \in H_1(\mathbb{Q})\backslash (\Sp_2(\mathbb{Q})\times \Sp_4(\mathbb{Q}))} j_{3}(\xi \gamma,  \iota(z,w))^{-k}
\end{equation*}
is cuspidal in $z$.

\begin{lemma}(\cite{BochererCrelle85}, Theorem 5) Let $f \in S_{k}(\Gamma_1)$ be a normalized eigenform.  Then we have the following inner product formula:
\begin{equation*}
\left\langle f,E_{3,k}(\ast, w) \right\rangle=\frac{(-1)^{\frac{k}{2}}2^{3-k}\pi \zeta(k-1) L(k-1,f)}{(k-1) \zeta(k)\zeta(2k-2)}E_{1,k}^2(-\bar{w}; f).
\end{equation*}
\end{lemma}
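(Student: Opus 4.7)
The plan is to begin from the double-coset splitting already recorded in (\ref{decomp}). Its first summand equals $E_{1,k}(z)E_{2,k}(w)$, so its contribution to $\langle f,E_{3,k}(\ast,w)\rangle_z$ is $\overline{E_{2,k}(w)}\langle f,E_{1,k}\rangle$, which vanishes because $f$ is a cusp form while $E_{1,k}$ is a genuine Eisenstein series on $\mf{h}^1$ (they are Petersson-orthogonal). Hence the entire inner product is captured by the ``big-cell'' piece
\[
F(z,w)=\sum_{\gamma\in H_1(\Q)\backslash(\Sp_2(\Q)\times\Sp_4(\Q))}j_3(\xi\gamma,\iota(z,w))^{-k},
\]
which the excerpt has already shown to be cuspidal in $z$, so that the Petersson pairing is well defined.

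Next I would unfold $\langle f,F(\ast,w)\rangle_z$. Reading off the explicit shape of $H_1$, its projection to $\Sp_2(\Q)$ is (the $w_2$-conjugate of) the standard Borel subgroup $B_2(\Q)$, while its projection to $\Sp_4(\Q)$ is the Klingen parabolic $P_{4,1}(\Q)$. This means the sum in $F(z,w)$ factors as an outer sum over $P_{4,1}(\Q)\backslash\Sp_4(\Q)$ together with an inner sum over $B_2(\Q)\backslash\Sp_2(\Q)$, and the usual unfolding converts $\int_{\Gamma_1\backslash\mf{h}^1}f(z)\overline{F(z,w)}y^{k-2}dx\,dy$ into an integral over the strip $\{0\le x<1,\ y>0\}$ against a single representative of the inner sum. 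Substituting $f(z)=\sum_{n\ge 1}a_f(n)q^n$, the $x$-integration isolates a single Fourier mode, the $y$-integration produces the gamma factor $\int_0^\infty e^{-4\pi ny}y^{k-2}dy=\Gamma(k-1)/(4\pi n)^{k-1}$, and multiplicativity of the Hecke eigenvalues rearranges the remaining Dirichlet series in $n$ as $L(k-1,f)$ times elementary Euler products, which collapse to the zeta quotient $\zeta(k-1)/\zeta(k)\zeta(2k-2)$.

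The remaining $w$-dependence is an average of terms of the form $j_2(\gamma_2,w)^{-k}f((\gamma_2\cdot w)^{\ast})$ indexed by $P_{4,1}(\Q)\backslash\Sp_4(\Q)$, which is by definition the Klingen Eisenstein series $E_{1,k}^2(\ast;f)$; its argument comes out as $-\bar w$ because of the complex conjugation built into the Petersson pairing together with the action of the Weyl element $\xi$ on $\iota(z,w)$. The main obstacle is the bookkeeping of constants: extracting the prefactor $2^{3-k}\pi/(k-1)$ from the gamma integral and the normalization of the strip, and pinning down the sign $(-1)^{k/2}$, which arises from assembling $i^{-k}$-type contributions produced by $\xi$ with the conjugation in the Petersson product. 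Matching every constant to the displayed formula requires fixing consistent normalizations for $E_{3,k}$, for the Klingen series $E_{1,k}^2(-\bar w;f)$, and for the Petersson inner product at the outset; having done so, this is exactly Theorem~5 of \cite{BochererCrelle85}, which one may cite directly once the above structural reductions are in place.
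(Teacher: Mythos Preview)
The paper does not prove this lemma at all: it is stated with attribution to Theorem~5 of \cite{BochererCrelle85} and then used as a black box in the derivation of equation~(\ref{eqn:innerprod}) and Theorem~\ref{thm:spectraldecomp}. So there is no ``paper's own proof'' to compare against beyond the bare citation.

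Your proposal goes further than the paper by sketching the unfolding argument that underlies B\"ocherer's result, and the outline you give is the correct one: kill the small-cell term by orthogonality of cusp forms to $E_{1,k}$, unfold the big-cell sum using the explicit description of $H_1(\Q)$ so that the $\Sp_2$-part collapses to a Rankin--Selberg integral producing $\Gamma(k-1)L(k-1,f)$ and the zeta quotient, while the $\Sp_4$-part reassembles into the Klingen series $E_{1,k}^2$ over $P_{4,1}(\Q)\backslash\Sp_4(\Q)$. This is exactly Garrett's and B\"ocherer's method. Your caveat about the bookkeeping of constants and the sign $(-1)^{k/2}$ is honest and apt; those details genuinely require fixing all normalizations consistently, and the paper sidesteps this entirely by invoking B\"ocherer's computed formula. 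If you intend your write-up to match the paper's level of detail, a one-line citation suffices; if you want a self-contained argument, your sketch is the right skeleton but would need the constant-tracking filled in.
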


We now wish to ``untwist" the $-\overline{w}$ argument of $E_{1,k}^2(-\bar{w};f)$. For $F$ any Siegel modular form, it is well known that if we write
\begin{equation*}
F^{c}(w) = \sum_{T \in \Lambda, T \geq 0} \overline{a(T)} e^{2 \pi i \Tr(Tw)}
\end{equation*}
where $a(T)$ is the $T^{\text{th}}$ Fourier coefficient of $F$, then $F^{c}(w) = \overline{F(-\bar{w})}$.  The fact that the Hecke operators are self-adjoint with respect to the Petersson product combined with the fact that $f \in S_{k}(\Gamma_1)$ is a normalized eigenform gives that all the eigenvalues of $f$ are real.   From this we see that $L(k-1,f)$ is a real number as well.  Furthermore, we can apply Theorem 2 of \cite{MizumotoInventMath81} to conclude that $E_{1,k}^2(w;f)^{c} = E_{1,k}^2(w;f^{c}) = E_{1,k}^2(w;f)$.  Thus, we have
\begin{align}\label{eqn:innerprod}
\left\langle E_{3,k}(\ast, w), f \right\rangle &= \overline{\left\langle f, E_{3,k}(\ast, w)\right\rangle} \\
        &= \frac{(-1)^{\frac{k}{2}}2^{3-k}\pi \zeta(k-1) L(k-1,f)}{(k-1) \zeta(k)\zeta(2k-2)}\overline{E_{1,k}^2(-\bar{w}; f)} \notag\\
        &= \frac{(-1)^{\frac{k}{2}}2^{3-k}\pi \zeta(k-1) L(k-1,f)}{(k-1) \zeta(k)\zeta(2k-2)}E_{1,k}^2(w; f)^{c}\notag \\
        &= \frac{(-1)^{\frac{k}{2}}2^{3-k}\pi \zeta(k-1) L(k-1,f)}{(k-1) \zeta(k)\zeta(2k-2)}E_{1,k}^2(w; f). \notag
\end{align}

Let $\mc{B}_{k}$ be an orthogonal basis of $S_{k}(\Gamma_1)$ consisting of normalized cuspidal eigenforms. We now choose a convenient basis $\mathcal{B}^2_k$ of $M_k(\Gamma_2)$ as follows. The fact that $\Phi$ is linear combined with property (\ref{rmk:2}) of the $\Phi$ operator given above shows that $\{E_{1,k}^{2}(w;f): f \in \mathcal{B}_{k}\}$ is a linearly independent set of the same dimension as $S_{k}(\Gamma_1)$.   Thus we choose a basis $\mathcal{B}_k^2$ of  $M_k(\Gamma_2)$ by completing $\{ E_{1,k}^{2}(w;f): f \in \mc{B}_{k}\}$ to a full basis. By cuspidality in the small variable and modularity in the big variable, we can expand the final term in equation (\ref{decomp}) as
\begin{equation} \label{E2}
\sum_{f \in \mathcal{B}_k,F \in \mathcal{B}_k^2}c(f,F)f(z)F(w)
\end{equation}
for some $c(f, F) \in \C$. We now use the inner product given in equation (\ref{eqn:innerprod}) to calculate the values of the coefficients $c(f,F)$.

\begin{theorem}\label{thm:spectraldecomp}
\begin{equation}\label{Fourier}
E_{3,k}(z,w)=E_{1,k}(z)E_{2,k}(w)+\frac{(-1)^{\frac{k}{2}}2^{3-k}\pi \zeta(k-1) }{(k-1)\zeta(k)\zeta(2k-2)} \sum_{f\in \mathcal{B}_k}\frac{L(k-1,f)}{\langle f,f\rangle}f(z)E_{1,k}^2(w;f).
\end{equation}
\end{theorem}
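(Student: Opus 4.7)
The plan is to combine the decomposition already worked out in equation (\ref{decomp}) with the inner product formula (\ref{eqn:innerprod}) and extract the coefficients $c(f,F)$ appearing in the expansion (\ref{E2}). The first summand in (\ref{decomp}) has already been identified with $E_{1,k}(z)E_{2,k}(w)$, and the big cell sum has already been shown to be cuspidal in $z$ and is clearly modular in $w$, so only the coefficients $c(f,F)$ in
\begin{equation*}
E_{3,k}(z,w) - E_{1,k}(z)E_{2,k}(w) = \sum_{f \in \mathcal{B}_k,\, F \in \mathcal{B}_k^2} c(f,F)\, f(z)\, F(w)
\end{equation*}
remain to be determined.

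To isolate the row $\{c(g,F) : F \in \mathcal{B}_k^2\}$ for a fixed $g \in \mathcal{B}_k$, I would take the Petersson inner product in the $z$-variable of both sides against $g(z)$. On the left, the elliptic Eisenstein series $E_{1,k}$ is orthogonal to cusp forms, so $\langle E_{1,k}(\ast)E_{2,k}(w), g\rangle_z = E_{2,k}(w)\,\langle E_{1,k}, g\rangle = 0$, and the inner product reduces to $\langle E_{3,k}(\ast, w), g\rangle$, which by (\ref{eqn:innerprod}) equals
\begin{equation*}
\frac{(-1)^{k/2}\, 2^{3-k}\, \pi\, \zeta(k-1)\, L(k-1,g)}{(k-1)\, \zeta(k)\, \zeta(2k-2)}\, E_{1,k}^2(w; g).
\end{equation*}
On the right, the orthogonality of $\mathcal{B}_k$ collapses the double sum to $\langle g, g\rangle \sum_{F \in \mathcal{B}_k^2} c(g,F) F(w)$.

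Equating the two expressions yields an identity of elements of $M_k(\Gamma_2)$. Since $\mathcal{B}_k^2$ was chosen so that $E_{1,k}^2(\ast; g)$ is itself one of its basis elements, uniqueness of the expansion in this basis forces $c(g,F) = 0$ for every $F \neq E_{1,k}^2(\ast; g)$ and fixes the remaining coefficient at
\begin{equation*}
c\bigl(g,\, E_{1,k}^2(\ast; g)\bigr) = \frac{(-1)^{k/2}\, 2^{3-k}\, \pi\, \zeta(k-1)\, L(k-1,g)}{(k-1)\, \zeta(k)\, \zeta(2k-2)\, \langle g, g\rangle}.
\end{equation*}
Substituting back into (\ref{E2}) and summing over $g \in \mathcal{B}_k$ gives the formula in the statement. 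There is no serious remaining obstacle: the substantive analytic inputs, namely the double coset decomposition identifying the ``non-big-cell'' term and B\"ocherer's inner product identity, have already been recorded, and what remains is just matching coefficients in a finite-dimensional orthogonal expansion. The only point requiring a modicum of care is the completion of $\{E_{1,k}^2(\ast;f) : f \in \mathcal{B}_k\}$ to a basis $\mathcal{B}_k^2$ of $M_k(\Gamma_2)$, which is legitimate precisely because property (\ref{rmk:2}) of the Siegel operator, combined with the linear independence of $\mathcal{B}_k \subset S_k(\Gamma_1)$, guarantees that the Klingen lifts of the $f \in \mathcal{B}_k$ are themselves linearly independent.
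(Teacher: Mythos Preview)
Your argument is correct and follows essentially the same route as the paper: take the Petersson inner product in the $z$-variable against a basis element $g\in\mathcal{B}_k$, use orthogonality of $E_{1,k}$ to cusp forms to kill the first term, invoke (\ref{eqn:innerprod}) on the left, and then match coefficients in the basis $\mathcal{B}_k^2$ to read off $c(g,F)$. The only cosmetic difference is that the paper pairs directly with the big-cell sum after noting the product term dies, whereas you subtract it first; the content is identical.
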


\begin{proof} We take the inner product of $E_{3,k}(z,w)$ with a $f_{i} \in \mathcal{B}_{k}$.  Note that since $f_{i}$ is a cuspform, the first term on the right hand side of equation (\ref{decomp}) contributes nothing to the inner product.  This follows immediately using that this term was shown to be $E_{k,1}(z)E_{k,2}(w)$.
Thus, we have
\begin{equation*}
\langle E_{3,k}(z,w),f_{i} \rangle = \left\langle \displaystyle \sum_{\gamma \in H_1(\mathbb{Q}) \backslash (\Sp_2(\mathbb{Q}) \times \Sp_4 (\mathbb{Q}))} j_{3}(\xi \gamma,\iota (z,w))^{-k},f_{i} \right\rangle.
\end{equation*}
Combining this with equation (\ref{E2}) we have
\begin{align*}
\langle E_{3,k}(z,w),f_{i} \rangle &=  \sum_{F \in \mathcal{B}_k^2} \sum_{f_{j} \in \mathcal{B}_k} c(f_j,F) F(w) \langle f_{j},f_{i} \rangle \\
    &= \sum_{F \in \mathcal{B}_{k}^2} c(f_{i},F) F(w) \langle f_{i}, f_{i} \rangle
\end{align*}
where we have used that $\mathcal{B}_{k}$ is an orthogonal basis.  We now use equation (\ref{eqn:innerprod}) to see that we have
\begin{equation}\label{eqn:coeffs}
\frac{(-1)^{\frac{k}{2}}2^{3-k}\pi \zeta(k-1) L(k-1,f)}{(k-1) \zeta(k)\zeta(2k-2)}E_{1,k}^2(w;f_{i})
= \sum_{F \in \mathcal{B}_k^2}  c(f_{i},F) \langle f_i ,f_{i} \rangle F(w).
\end{equation}
Because $E_{1,k}^2(\ast; f_{i})$ is in the basis $\mathcal{B}_k^2$ and the right hand side is a linear combination of the $F \in \mathcal{B}_k^2$, we have that all
of the $c(f_{i},F)$ are zero except when $F=E_{1,k}^2(\ast; f_{i})$. This allows us to diagonalize the expansion of $E_{3,k}(z,w)$ so that we have
\begin{equation*}
E_{3,k}(z,w) = E_{1,k}(z) E_{2,k}(w) + \sum_{f \in \mathcal{B}_{k}} c(f, E_{1,k}^{2}(\ast, f)) f(z) E_{1,k}^{2}(w;f).
\end{equation*}
It only remains to calculate the values of $c(f,E_{1,k}^{2}(\ast; f))$.  However, equating coefficients of $E_{1,k}^2(w;f)$ in equation (\ref{eqn:coeffs}) immediately yields
\begin{equation}\label{coef}
c(f,E_{1,k}^2(\ast; f)) = \frac{(-1)^{\frac{k}{2}}2^{3-k}\pi \zeta(k-1) L(k-1,f)}{(k-1) \zeta(k)\zeta(2k-2) \langle f,f \rangle}.
\end{equation}
\end{proof}

\section{Proof of Theorem \ref{thm:1.1}}

In this section we prove Theorem \ref{thm:1.1}.  This is accomplished by further restricting $E_{3,k}(z,w)$ so that $w$ lies in the image of the embedding $\mf{h}^1 \times \mf{h}^1 \hookrightarrow \mf{h}^2$.  In particular, we are now considering the embedding $\mf{h}^1 \times \mf{h}^1 \times \mf{h}^1 \hookrightarrow \mf{h}^3$.  We abuse notation and denote the elements $\diag(z_1, z_2, z_3)$ as simply $(z_1, z_2, z_3)$.  Applying this restriction to equation (\ref{Fourier}) gives
\begin{equation}\label{eqn:restricteddecomp}
E_{3,k}(z_1,z_2,z_3)=E_{1,k}(z_1)E_{2,k}(z_2,z_3)\\
    +\frac{(-1)^{\frac{k}{2}}2^{3-k}\pi \zeta(k-1) }{(k-1)\zeta(k)\zeta(2k-2)} \sum_{f\in \mathcal{B}_k}\frac{L(k-1,f)}{\langle f,f\rangle}f(z_1)E_{1,k}^2(z_2,z_3; f).
\end{equation}
Set $q_{j} = e^{2 \pi i z_{j}}$.  The proof of Theorem \ref{thm:1.1} now amounts to calculating the Fourier coefficient of the $q_1 q_2 q_3$-term on each side of equation (\ref{eqn:restricteddecomp}) and equating the two.

We begin with the Siegel Eisenstein series $E_{3,k}(z_1, z_2, z_3)$.  The relevant result in this case has already been calculated by Lanphier.

\begin{lemma}(\cite{LanphierMathComp10}, Lemma 6) The Fourier coefficient of the $q_1 q_2 q_3$-term of $E_{3,k}(z_1, z_2, z_3)$ is
\begin{equation*}
\frac{-2^3k}{B_k}+ \frac{2^3k(k-1)}{B_k(B_{2k-2})}(2^3H(k-1,3)+3H(k-1,4))+ \frac{(-1)^{\frac{k}{2}}2^3k(k-1)}{\vert B_kB_{2k-2} \vert}(2^{2k-4}+2 \cdot 3^{k-1}+2^{k+2}-23).
\end{equation*}
\end{lemma}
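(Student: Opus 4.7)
The plan is to compare Fourier expansions. The degree-$3$ Siegel Eisenstein series has an expansion $E_{3,k}(Z) = \sum_{T \geq 0} a_k(T) e^{2\pi i \Tr(TZ)}$ indexed by positive semi-definite half-integral symmetric $3 \times 3$ matrices $T$. Restricting $Z$ to $\diag(z_1, z_2, z_3)$ makes $\Tr(TZ) = T_{11} z_1 + T_{22} z_2 + T_{33} z_3$, so the Fourier coefficient of $q_1 q_2 q_3$ is
\begin{equation*}
\sum_{\substack{T \geq 0 \\ T_{11}=T_{22}=T_{33}=1}} a_k(T).
\end{equation*}
Parametrizing the off-diagonal entries as $a/2, b/2, c/2$ with $a,b,c \in \Z$, the $2\times 2$ principal minor inequalities $4 - a^2 \geq 0$, and so on, force $|a|, |b|, |c| \leq 2$, reducing the sum to a finite enumeration of candidate matrices.

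Next one stratifies by the rank of $T$. For the singular strata with $\mathrm{rank}\,T < 3$, the Siegel operator identity $\Phi E_{3,k} = E_{2,k}$ (and iteratively $\Phi E_{2,k} = E_{1,k}$) lets us equate $a_k(T)$ with a Fourier coefficient of the lower-degree Eisenstein series after bringing $T$ into block form $\diag(T',0)$ by $\GL_3(\Z)$-equivalence. Since the Fourier coefficients of $E_{1,k}$ and $E_{2,k}$ are classically known in terms of Bernoulli numbers and Cohen's generalized class numbers $H(k-1, n)$, the rank-$1$ and rank-$2$ strata produce the term $-\tfrac{2^3 k}{B_k}$ together with the contributions featuring $H(k-1, 3)$ and $H(k-1, 4)$ in the stated formula.

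The rank-$3$ stratum is the substantive part of the computation. Here the Siegel/Kitaoka explicit formula writes $a_k(T)$ as an archimedean factor (which accounts for the $(-1)^{k/2}$ sign and the $B_k B_{2k-2}$ normalization) times a product of local densities over primes. Enumerating the non-degenerate candidate matrices $T$ modulo $\GL_3(\Z)$-equivalence yields a short list of discriminants involving only the primes $2$ and $3$; computing the local densities prime-by-prime and summing over orbits, weighted by the order of the corresponding stabilizer, assembles into the polynomial $2^{2k-4} + 2\cdot 3^{k-1} + 2^{k+2} - 23$. The main obstacle will be the local-density bookkeeping at the ramified primes $p=2,3$ where the small discriminants demand care, along with the tracking of orbit stabilizers and of the sign and normalization constants; since the statement is credited to Lanphier, the detailed execution can be carried out following \cite{LanphierMathComp10}.
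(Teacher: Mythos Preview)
The paper does not prove this lemma at all: it is quoted verbatim as Lemma~6 of \cite{LanphierMathComp10} and used as a black box. So there is no ``paper's own proof'' to compare your proposal against.

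That said, your outline is the standard route and is essentially what Lanphier does: restrict the Fourier expansion of $E_{3,k}$ to the diagonal, enumerate the finitely many half-integral $T$ with diagonal $(1,1,1)$ subject to the positivity constraints, stratify by rank, read off the singular contributions from $E_{1,k}$ and $E_{2,k}$ via the Siegel operator, and handle the definite $T$'s through the explicit Fourier coefficient formula (local densities). Your identification of the bookkeeping at $p=2,3$ and of the orbit/stabilizer counting as the delicate points is accurate. Since you already defer the execution to \cite{LanphierMathComp10}, your proposal and the paper's treatment coincide: both invoke Lanphier for the actual computation.
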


Now, we turn our attention to the first term in the right hand side of equation (\ref{Fourier}), namely, $E_{1,k}(z_1)E_{2,k}(z_2,z_3)$. It is well known that
\begin{equation*}
E_{1,k}(z_1)=1+ \frac{2}{\zeta (1-k)} \displaystyle \sum_{n=1}^{\infty} \sigma_{k-1}(n)q_1^n.
\end{equation*}
In particular, we see that the $q_1$-coefficient of $E_{1,k}(z_1)$ is
\begin{equation*}
\frac{2}{\zeta(1-k)}.
\end{equation*}

Let $E_{2,k}(w)$ have Fourier expansion given by
\begin{equation*}
E_{2,k}(w) = \sum_{T \in \Lambda , T\geq 0} A_{2,k}(T) e^{2 \pi i \Tr(Tw)},
\end{equation*}
where $\Lambda = \left\{\left(\begin{smallmatrix}a & b/2 \\ b/2 & c \end{smallmatrix} \right) \vert a,b,c \in \Z \right\}$.  Then
the Fourier expansion of $E_{2,k}$ restricted to $\mf{h}^1 \times \mf{h}^1$ is given by
\begin{equation*}
E_{2,k}(z_2, z_3) = \sum_{n_2,n_3 \geq 0} \left( \sum_{T \in \Lambda(n_2,n_3)} A_{2,k}(T) \right) q_2^{n_2} q_3^{n_3}
\end{equation*}
where $\Lambda (n_2,n_3) = \left\{ \left(\begin{smallmatrix} n_2& b/2 \\ b/2 &n_3 \end{smallmatrix} \right) \in \Lambda \vert 4n_2 n_3 - b^2 \geq 0 \right\}$.
As we are interested in the case that $n_2 = n_3 =1$, we see that the only possible values for $b$ are $0, \pm 1, \pm 2$.  Thus, we have that the $q_2 q_3$-coefficient of $E_{2,k}(z_2, z_3)$ is given by
\begin{equation*}
\sum_{b=-2}^{2} A_{2,k} \left(\begin{pmatrix} 1 & b/2 \\ b/2 & 1 \end{pmatrix} \right).
\end{equation*}
The $T^{\text{th}}$ Fourier coefficient of $E_{2,k}$ is given in \cite{eichlerzagier} as
\begin{equation*}
A_{2,k}(T) = \frac{2}{\zeta(3-2k)\zeta(1-k)} \sum_{d \vert (a,b,c)} d^{k-1} H\left(k-1, \frac{4ac-b^2}{d^2}\right)
\end{equation*}
for $T = \begin{pmatrix} a & b/2 \\ b/2 & c \end{pmatrix}.$  Thus, we have that the
$q_2q_3$-coefficient of $E_{2,k}(z_2,z_3)$ is
\begin{equation*}
\frac{2}{\zeta(3-2k)\zeta(1-k)} \sum_{b=-2}^{2} H(k-1, 4-b^2).
\end{equation*}
Thus, we have that the $q_1 q_2 q_3$-coefficient of $E_{1,k}(z_1)E_{2,k}(z_2, z_3)$ is given by
\begin{equation*}
\frac{4}{\zeta(3-2k)\zeta(1-k)^2} \sum_{b=-2}^{2} H(k-1, 4-b^2).
\end{equation*}

It only remains to consider the summation over $\mathcal{B}_{k}$ on the right hand side of equation (\ref{eqn:restricteddecomp}).  As each $f \in \mathcal{B}_{k}$ is a normalized eigenform, we know the $q_1$-coefficient is $1$.  Thus, determining the $q_1 q_2 q_3$-coefficient of these terms comes down to analyzing the $q_2 q_3$-coefficients of the Klingen Eisenstein series $E_{1,k}^2(z_2,z_3;f)$.
We have the Fourier expansion
\begin{equation*}
E_{1,k}^{2}(z_2, z_3;f)= \sum_{n_2,n_3 \geq 0} \left(\sum_{T \in \Lambda(n_2,n_3)} A_{1,k}^2(T;f) \right) q_2^{n_2}q_3^{n_3}.
\end{equation*}
As above, we see that the $q_2 q_3$-coefficient is given by
\begin{equation*}
\sum_{b=-2}^{2} A_{1,k}^2\left(\begin{pmatrix} 1 & b/2 \\ b/2 & 1 \end{pmatrix}; f \right).
\end{equation*}
We have by \cite{MizumotoInventMath81} that the $\begin{pmatrix} 1 & 0 \\ 0 & 0 \end{pmatrix}$-Fourier coefficient of $E_{1,k}^2(\ast; f)$ is equal to the first Fourier coefficient of $f$, i.e., it is 1.  Furthermore, since $\begin{pmatrix} 1&-1 \\ -1&1 \end{pmatrix}$ and $\begin{pmatrix} 1&1 \\ 1&1 \end{pmatrix}$ are both unimodularly equivalent to $\begin{pmatrix} 1&0 \\ 0&0 \end{pmatrix}$,  we conclude that
\begin{equation*}
A_{1,k}^2\left( \begin{pmatrix} 1&-1 \\ -1&1 \end{pmatrix}; f \right) = A_{1,k}^2 \left( \begin{pmatrix} 1&1 \\ 1&1 \end{pmatrix}; f \right) = A_{1,k}^2 \left( \begin{pmatrix} 1&0 \\ 0&0 \end{pmatrix}; f \right) = 1.
\end{equation*}

It only remains to consider the final three possibilities for $T$.
For $T$ so that $-\det(2T)$ is a fundamental discriminant, Theorem 1 in \cite{MizumotoInventMath81} gives that
\begin{equation*}
 A_{1,k}^2(T;f) = (-1)^{\frac{k}{2}} \frac{(k-1)!}{(2k-2)!}(2 \pi)^{k-1} \det(2T)^{k- \frac{3}{2}} \frac{L(k-1, \chi_{-\det(2T)}) L(k-1,f, \vartheta_T) }{L(2k-2,f,\Sym^2)}
 \end{equation*}
 where $\chi_{-\det(2T)}$ is the Dirichlet character associated to the field $\mathbb{Q}(\sqrt{-\det(2T)})$.  Observing that $-\det(2T)$ is a fundamental discriminant for each of the remaining three values of $T$, we use this result to calculate the remaining Fourier coefficients.
 For $T=  \begin{pmatrix} 1&0 \\ 0&1 \end{pmatrix}$, we have
\begin{equation*}
A_{1,k}^2\left(\begin{pmatrix} 1&0 \\ 0&1 \end{pmatrix}; f \right) = (-1)^{\frac{k}{2}} \frac{(k-1)!}{(2k-2)!}(2 \pi)^{k-1} 2^{2k- 3} \frac{L(k-1, \chi_{-4}) L(k-1,f, \vartheta_1) }{L(2k-2,f,\Sym^2)}.
 \end{equation*}
 Finally, the remaining two cases are given by
 \begin{align*}
 A_{1,k}^2 \left(\begin{pmatrix} 1& -\frac{1}{2} \\ -\frac{1}{2}&1 \end{pmatrix}; f \right) &= A_{1,k}^2 \left( \begin{pmatrix} 1&\frac{1}{2} \\ \frac{1}{2}&1 \end{pmatrix}; f \right) \\
 &= (-1)^{\frac{k}{2}} \frac{(k-1)!}{(2k-2)!}(2 \pi)^{k-1}  3^{k- \frac{3}{2}} \frac{L(k-1, \chi_{-3}) L(k-1,f, \vartheta_2)}{L(2k-2,f,\Sym^2)}.
 \end{align*}

Recall the following well-known formulas
\begin{align*}
B_{2n} &= (-1)^{n+1} |B_{2n}| \\
\zeta(2n) &= \frac{(-1)^{n + 1} (2 \pi)^{2n} B_{2n}}{2 (2n)!} \\
\zeta(-n) &= - \frac{B_{n+1}}{n+1}
\end{align*}
for $n \geq 1$.  Using that $k$ is necessarily even, we combine these formulas with the calculations given above and rearrange to obtain Theorem \ref{thm:1.1}.

\bibliographystyle{plain}
 \bibliography{mybib}

\begin{thebibliography}{10}

\bibitem{BochererCrelle85}
S.~B$\ddot{\text{o}}$cherer.
\newblock {$\ddot{\text{U}}$ber die Funktionalgleichung automorpher
  $L$-Funktionen zur Siegelscher Modulgruppe}.
\newblock {\em J. reine angew. Math.}, 362:146--168, 1985.

\bibitem{bocherer81}
S~B\"{o}cherer.
\newblock {\em {Uber das Verhalten der Fourierentwicklung bei Liflung von
  Modulformen}}.
\newblock PhD thesis, {Albert-Ludwigs-Universit�t Freiburg im Breisgau},
  Freiburg, 1981.

\bibitem{pullbacks}
J.~Brown.
\newblock {On the cuspidality of pullbacks of Siegel Eisenstein series and
  applications to the Bloch-Kato conjecture}.
\newblock pages 1--37, 2008.
\newblock submitted.

\bibitem{cohen75}
H.~Cohen.
\newblock {Sums involving the values at negative integers of $L$-functions of
  quadratic char- acters}.
\newblock {\em Math. Ann.}, 217:271--285, 1975.

\bibitem{eichlerzagier}
M.~Eichler and D.~Zagier.
\newblock {\em {The theory of Jacobi forms}}, volume~55 of {\em Prog. in Math.}
\newblock Birkhauser, Boston, 1985.

\bibitem{GarrettProgMath84}
P.~Garrett.
\newblock {Pullbacks of Eisenstein series; applications}.
\newblock In {\em {Automorphic forms of several variables (Katata, 1983)}},
  volume~46 of {\em {Prog. in Math.}}, pages 114--137, Boston, MA, 1984.
  Birkhauser.

\bibitem{GarrettMathAnn87}
P.~Garrett.
\newblock {Decomposition of Eisenstein Series: Rankin Triple Products}.
\newblock {\em Math. Ann.}, 125(2):209--235, 1987.

\bibitem{GeerSpringer08}
G.~Geer.
\newblock {Siegel modular forms and their applications}.
\newblock In K.~Ranestad, editor, {\em The 1-2-3 of Modular Forms},
  {Universitext}, pages 181--245, Berlin-Heidelberg, 2008. Springer.

\bibitem{HeimMathComp09}
B.~Heim.
\newblock {Congruences for the Ramanujan Function and Generalized class
  numbers}.
\newblock {\em Math. Comp.}, 78(265):431--439, 2009.

\bibitem{Klingen}
H.~Klingen.
\newblock {\em {Introductory Lectures on Siegel Modular Forms}}, volume~20 of
  {\em Cambridge Studies in Advanced Mathematics}.
\newblock Cambrdige Univ. Press, Cambridge, U.K., 1996.

\bibitem{KohnenPOSTECH07}
W.~Kohen.
\newblock {A short course on Siegel modular forms}.
\newblock {\em POSTECH Lecture series}, 2007.

\bibitem{LanphierMathComp10}
D.~Lanphier.
\newblock {Values of symmetric cube $L$-functions and Fourier coefficients of
  Siegel Eisenstein series of degree-3}.
\newblock {\em Math. Comp.}, S 0025-5718(10)02350-1, 2010.

\bibitem{ManinSbornik73}
J.I. Manin.
\newblock {Periods of parabolic forms and $p$-adic Hecke series}.
\newblock {\em Math. USSR Sbornik}, 21(3):371--393, 1973.

\bibitem{MizumotoInventMath81}
S.~Mizumoto.
\newblock {Fourier Coefficients of Generalized Eisenstein Series of Degree
  Two}.
\newblock {\em Invent. Math.}, 65:115--135, 1981.

\bibitem{ShimuraMathAnn77}
G.~Shimura.
\newblock {On the periods of modular forms}.
\newblock {\em Math. Ann.}, 229:211--221, 1977.

\bibitem{ZagierSpringer77}
D.~Zagier.
\newblock {\em {Modular forms whose Fourier coefficients involve zeta-functions
  of qua- dratic fields}}, volume 627 of {\em Lect. Notes in Math.}
\newblock Springer, 1977.

\end{thebibliography}

\end{document}